\newtheorem{theorem}{Theorem}[section]
\newtheorem{lemma}[theorem]{Lemma}
\newtheorem{corollary}[theorem]{Corollary}
\newtheorem{proposition}[theorem]{Proposition}
\newtheoremstyle{named}{}{}{\itshape}{}{\bfseries}{.}{.5em}{\thmnote{#3's }#1}
\theoremstyle{named}
\newtheoremstyle{nnamed}{}{}{\itshape}{}{\bfseries}{.}{.5em}{\thmnote{#3' }#1}
\theoremstyle{nnamed}
\newcommand{\calc}{{\cal C}}
\def\barr{\begin{array}}
\def\earr{\end{array}}
\title{Finite groups with two Chermak-Delgado measures}
\author{Marius T\u arn\u auceanu}
\date{November 19, 2018}
\begin{document}

\maketitle

\begin{abstract}
In this note, we study the finite groups whose Chermak-Delgado measure has exactly two values.
They determine an interesting class of $p$-groups containing cyclic groups of prime order and extraspecial $p$-groups.
\end{abstract}

{\small
\noindent
{\bf MSC2000\,:} Primary 20D30; Secondary 20D60, 20D99.

\noindent
{\bf Key words\,:} Chermak-Delgado measure, Chermak-Delgado lattice, subgroup lattice, generalized quaternion $2$-group, extraspecial $p$-group, outer abelian $p$-group, $p$-group of maximal class.}

\section{Introduction}

Throughout this paper, let $G$ be a finite group and $L(G)$ be the subgroup lattice of $G$. Denote by
\begin{equation}
m_G(H)=|H||C_G(H)|\nonumber
\end{equation}the \textit{Chermak-Delgado measure} of a subgroup $H$ of $G$ and let
\begin{equation}
m^*(G)={\rm max}\{m_G(H)\mid H\leq G\} \mbox{ and } {\cal CD}(G)=\{H\leq G\mid m_G(H)=m^*(G)\}.\nonumber
\end{equation} Then the set ${\cal CD}(G)$ forms a modular, self-dual sublattice of $L(G)$, which is called the \textit{Chermak-Delgado lattice} of $G$. It was first introduced by Chermak and Delgado \cite{7}, and revisited by Isaacs \cite{9}. In the last years there has been a growing interest in understanding this lattice (see e.g. \cite{3,4,5,6,8,10,11,12,13,15,18,20}). We recall several important properties of the Chermak-Delgado measure that will be used in our paper:
\begin{itemize}
\item[$\cdot$] if $H\leq G$ then $m_G(H)\leq m_G(C_G(H))$, and if the measures are equal then $C_G(C_G(H))=H$;
\item[$\cdot$] if $H,K\leq G$ then $m_G(H)m_G(K)\leq m_G(\langle H,K\rangle)m_G(H\cap K)$, and the equality occurs if and only if $\langle H,K\rangle=HK$ and $C_G(H\cap K)=C_G(H)C_G(K)$;
\item[$\cdot$] if $H\in {\cal CD}(G)$ then $C_G(H)\in {\cal CD}(G)$ and $C_G(C_G(H))=H$;
\item[$\cdot$] the minimum subgroup $M(G)$ of ${\cal CD}(G)$ (called the \textit{Chermak-Delgado subgroup} of $G$) is characteristic, abelian, and contains $Z(G)$.
\end{itemize}

We remark that the Chermak-Delgado measure associated to a finite group $G$ can be seen as a function $$m_G:L(G)\longrightarrow\mathbb{N}^*,\, H\mapsto m_G(H),\, \forall\, H\in L(G).$$The starting point for our discussion is given by Corollary 3 of \cite{16}, which states that there is no finite non-trivial group $G$ such that ${\cal CD}(G)=L(G)$. In other words, $m_G$ has at least two distinct values for every finite non-trivial group $G$. This leads to the following natural question:

\bigskip
\noindent{\it \hspace{5mm}Which are the finite groups $G$ whose Chermak-Delgado measure $m_G$ has exactly two values}?
\bigskip

In what follows, let $\calc$ be the class of finite groups satisfying the above property. Its study is the main goal of the current note.
\bigskip

We recall several basic definitions:
\begin{itemize}
\item[-] a \textit{generalized quaternion $2$-group} is a group of order $2^n$, $n\geq 3$, defined by the presentation $$Q_{2^n}=\langle a,\,b\mid a^{2^{n-1}}=1,\, a^{2^{n-2}}=b^2,\, b^{-1}ab=a^{-1}\rangle;$$
\item[-] a finite $p$-group $G$ is said to be \textit{extraspecial} if $Z(G)=G'=\Phi(G)$ has order $p$;
\item[-] a finite $p$-group $G$ is said to be \textit{outer abelian} if $G$ is non-abelian, but every proper quotient group of $G$ is abelian;
\item[-] a finite $p$-group $G$ of order $p^n$ is said to be \textit{of maximal class} if the nilpotence class of $G$ is $n-1$.
\end{itemize}

The following results on $p$-groups will be useful to us.  Lemmas 1.1 and 1.2 appear in (4.26) and (4.4) of \cite{14}, II, Lemma 1.3 in Corollary 10 of \cite{19}, and Lemma 1.4 in Proposition 1.8 of \cite{1}.

\begin{lemma}
Any group of order $p^4$ contains an abelian subgroup of order $p^3$.
\end{lemma}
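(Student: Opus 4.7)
The plan is to split on the order of $Z(G)$. If $G$ is abelian, any subgroup of order $p^3$ works (one exists by the standard fact that a finite $p$-group has subgroups of every order dividing $|G|$). So I will assume $G$ is non-abelian of order $p^4$. Since $G$ is a non-trivial $p$-group, $Z(G)\neq 1$, and $|Z(G)|=p^3$ is excluded because then $G/Z(G)$ would be cyclic and $G$ abelian. Hence $|Z(G)|\in\{p,p^2\}$, giving two cases to handle.

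In the case $|Z(G)|=p^2$, the quotient $G/Z(G)$ has order $p^2$ and cannot be cyclic, so $G/Z(G)\cong C_p\times C_p$. Pick any $x\in G\setminus Z(G)$ and consider $H=\langle x\rangle Z(G)$. Its image in $G/Z(G)$ is the cyclic subgroup $\langle \overline{x}\rangle$ of order $p$, so $|H|=p^3$. Moreover $H$ is abelian: $Z(G)$ is central in $H$, and $x$ commutes with $Z(G)$ and with itself, so $H$ is generated by pairwise commuting elements. This produces the desired abelian subgroup of order $p^3$.

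In the case $|Z(G)|=p$, I would use a normal subgroup $N\trianglelefteq G$ of order $p^2$ (which exists since every $p$-group has normal subgroups of each order dividing $|G|$). Such $N$ is abelian, and $N\subseteq C_G(N)$. The key step is a bound on $C_G(N)$: because $G/C_G(N)$ embeds into $\mathrm{Aut}(N)$, and for both possible isomorphism types of $N$ (cyclic of order $p^2$ or elementary abelian of rank $2$) the Sylow $p$-subgroup of $\mathrm{Aut}(N)$ has order $p$, we deduce $[G:C_G(N)]\leq p$, i.e.\ $|C_G(N)|\geq p^3$. On the other hand $|C_G(N)|=p^4$ would force $N\subseteq Z(G)$, contradicting $|Z(G)|=p<|N|$. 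Hence $|C_G(N)|=p^3$, and it contains $N$ in its center, so $|Z(C_G(N))|\geq p^2$; a group of order $p^3$ with center of index at most $p$ is abelian, so $C_G(N)$ is the required abelian subgroup of order $p^3$.

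The only step that really requires any thought is pinning down $|C_G(N)|$ in the last case; everything else is a direct application of standard $p$-group facts (center nontrivial, quotient-by-center-cyclic-implies-abelian, existence of normal subgroups of each order). The slight subtlety is ruling out $C_G(N)=G$, which is precisely where the hypothesis $|Z(G)|=p$ gets used.
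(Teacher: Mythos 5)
Your proof is correct and complete. The paper itself gives no argument for this lemma --- it simply cites (4.26) of Suzuki's \emph{Group Theory} II --- so there is nothing to compare step by step; what you have supplied is a self-contained elementary replacement for that citation. The case split on $|Z(G)|$ is handled correctly: the abelian case and the case $|Z(G)|=p^2$ are immediate, and in the case $|Z(G)|=p$ your bound $[G:C_G(N)]\leq p$ is valid because for both $N\cong C_{p^2}$ and $N\cong C_p\times C_p$ the Sylow $p$-subgroup of $\mathrm{Aut}(N)$ has order exactly $p$ (including when $p=2$, where $|\mathrm{Aut}(C_4)|=2$ and $|\mathrm{Aut}(C_2\times C_2)|=6$), and $G/C_G(N)$ is a $p$-group. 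The exclusion of $C_G(N)=G$ via $|Z(G)|=p$ and the final observation that $N\leq Z(C_G(N))$ forces $C_G(N)$ abelian are both sound. This is essentially the standard textbook proof, and it is a reasonable addition if one wants the paper to be self-contained.
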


\begin{lemma}
A finite $p$-group $G$ has a unique subgroup of order $p$ if and only if either it is cyclic or $p=2$ and $G\cong Q_{2^n}$ for some $n\geq 3$.
\end{lemma}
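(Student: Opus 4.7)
My plan is to establish both directions, with the hard one by induction on $|G|$. The easy direction is immediate for cyclic groups, and for $G=Q_{2^n}$ it is a direct computation from the presentation: one checks $(a^ib)^2 = a^i(b a^i b^{-1})b^2 = a^i a^{-i} b^2 = a^{2^{n-2}}$ for every $i$, so every element outside $\langle a\rangle$ has order $4$, and $\langle a^{2^{n-2}}\rangle$ is the unique subgroup of order $2$.

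For the converse I would induct on $|G|$. Since the hypothesis is inherited by every subgroup, the inductive hypothesis says that every proper subgroup of $G$ is cyclic or generalized quaternion. If $G$ is abelian, the structure theorem for finite abelian $p$-groups immediately forces $G$ cyclic: a non-cyclic abelian $p$-group contains $C_p\times C_p$ and hence has $p+1$ subgroups of order $p$. So I may assume $G$ is non-abelian, and pick a maximal subgroup $M$ of $G$ of index $p$; by induction $M$ is cyclic or $M\cong Q_{2^{n-1}}$, splitting the argument.

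In the first case $G$ is a $p$-group with a cyclic maximal subgroup, and I would invoke the classical classification (Huppert, Satz I.14.9) listing such groups as $C_{p^n}$, $C_{p^{n-1}}\times C_p$, the modular maximal-cyclic group $M_{p^n}$, and, if $p=2$, additionally $D_{2^n}$, $SD_{2^n}$, $Q_{2^n}$. For each candidate other than $C_{p^n}$ and $Q_{2^n}$, one exhibits an element of order $p$ outside the cyclic maximal subgroup, producing a second subgroup of order $p$ and a contradiction. In the second case $p=2$, $n\geq 4$, and the unique involution of $M$ is central in $G$; picking $x\in G\setminus M$ with $x^2\in M$, I would analyse the conjugation action of $x$ on $M$ together with the possible values of $x^2$, subject to the compatibility that $(\mathrm{ad}\,x)^2 = \mathrm{ad}(x^2)$ in $\mathrm{Aut}(M)$, and discard every choice that produces a second involution, retaining only the one yielding the defining presentation of $Q_{2^n}$.

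The main obstacle is the second case, and especially the base subcase $n=4$, where $M\cong Q_8$ has the atypically rich automorphism group $S_4$ and no characteristic cyclic subgroup of index $2$; here I would proceed more concretely by reducing modulo the central involution to a group of order $8$ and using the short list of possibilities at that order. For $n\geq 5$, the unique cyclic subgroup $\langle a\rangle\leq M$ of index $2$ is characteristic, and the extension analysis reduces to a finite list of candidates that are ruled out one at a time. The classification invoked in the first case is nontrivial but standard, so I would quote it rather than reprove it.
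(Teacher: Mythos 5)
The paper does not actually prove this lemma: it is quoted as a classical fact from (4.4) of Suzuki's \emph{Group Theory} II, so there is no in-paper argument to match yours against, and any complete proof is acceptable. Your easy direction is correct, and your Case 1 is a standard and legitimate reduction: the hypothesis passes to subgroups, so induction applies, and once $G$ has a cyclic maximal subgroup, Huppert's classification leaves only $C_{p^n}$, $C_{p^{n-1}}\times C_p$, $M_{p^n}$ and, for $p=2$, the dihedral, semidihedral and generalized quaternion groups, of which all but the first and last visibly contain a second subgroup of order $p$.

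The gap is Case 2, which is where essentially all of the content of the $p=2$ half of the theorem lives, and which you describe rather than carry out. Two concrete issues. First, for $n\ge 5$ you reduce to the action of $x$ on the characteristic cyclic subgroup $\langle a\rangle\le M$, but $x^2$ need not lie in $\langle a\rangle$ --- a priori it is an arbitrary element of $M\cong Q_{2^{n-1}}$ --- so the ``finite list of candidates'' is larger and more awkward than the sketch suggests, and eliminating the entries is the proof, not an afterthought. Second, the base subcase $n=4$, with $\mathrm{Aut}(Q_8)\cong S_4$, is explicitly flagged as the main obstacle and then left unresolved. One way to rescue your setup: if some $x\notin M$ has $x^2\in\langle a\rangle$, then $\langle a,x\rangle$ is maximal in $G$, hence by induction cyclic (back to Case 1) or generalized quaternion; in the latter event both $x$ and $b$ invert $a$, so $xb^{-1}$ centralizes $\langle a\rangle$ and lies outside $M$, forcing $C_G(\langle a\rangle)$ to be an index-$2$ subgroup with the central subgroup $\langle a\rangle$ of index $2$ in it, hence abelian with a unique involution, hence cyclic --- again Case 1. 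But the cleanest repair is the textbook route that makes Case 2 evaporate: let $A$ be a maximal normal abelian subgroup of $G$; then $A$ is cyclic and $C_G(A)=A$, so $G/A$ embeds in $\mathrm{Aut}(A)$, and a short computation with $\mathrm{Aut}(C_{2^k})\cong C_2\times C_{2^{k-2}}$ (using uniqueness of the involution to exclude the automorphisms $a\mapsto a^{1+2^{k-1}}$ and $a\mapsto a^{-1+2^{k-1}}$) gives $[G:A]\le 2$, landing you directly in Case 1. As written, your proposal is a plausible plan whose decisive step has not been executed.
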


\begin{lemma}
A finite $p$-group $G$ is outer abelian if and only if $|G'|=p$ and $Z(G)$ is cyclic, and $G$ is one of the following non-isomorphic
groups:
\begin{itemize}
\item[{\rm a)}] $M(n,1)=\langle a,b\mid a^{p^n}=b^p=1, a^b=a^{1+p^{n-1}}\rangle$, $n\geq 3$;
\item[{\rm b)}] an extraspecial $p$-group;
\item[{\rm c)}] $G=E*A$, where $E$ is an extraspecial $p$-group and $A\cong M(n,1)$, $n\geq 3$;
\item[{\rm d)}] $G=E*A$, where $E$ is an extraspecial $p$-group and $A\cong C_{p^t}$, $t\geq 2$
\end{itemize}
\end{lemma}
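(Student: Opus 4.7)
My plan is to prove the lemma in two stages: first the equivalence that $G$ is outer abelian if and only if $|G'|=p$ and $Z(G)$ is cyclic, then the classification into families (a)--(d).

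For the equivalence, the forward direction uses that $G/N$ is abelian iff $G'\leq N$. In an outer abelian $G$, every nontrivial normal subgroup contains $G'$, so $G'$ is the unique minimal normal subgroup of $G$; since minimal normal subgroups of a $p$-group are central of order $p$, this forces $|G'|=p$ and $G'\leq Z(G)$, and it makes $Z(G)$ cyclic (otherwise $Z(G)$ would contain two distinct subgroups of order $p$, each minimal normal in $G$). Conversely, $|G'|=p$ gives $G'\leq Z(G)$, and by cyclicity $G'$ is the unique subgroup of order $p$ of $Z(G)$; since every nontrivial normal subgroup of a $p$-group meets $Z(G)$ nontrivially, each such subgroup contains $G'$ and every proper quotient is abelian.

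For the classification, I would first extract the structural skeleton. The conditions give $G$ of nilpotence class $2$, and the identity $[g^p,x]=[g,x]^p=1$ (using that $[g,x]\in G'$ is central of order $p$) shows $G/Z(G)$ is elementary abelian. The commutator induces a non-degenerate alternating $\mathbb{F}_p$-bilinear form on $G/Z(G)$, forcing $\dim_{\mathbb{F}_p}G/Z(G)=2n$ even. Writing $|Z(G)|=p^t$: if $t=1$ the group is extraspecial by definition (case (b)). If $t\geq 2$, I would attempt a central-product decomposition by fixing a symplectic basis $\bar x_1,\bar y_1,\ldots,\bar x_n,\bar y_n$ of $G/Z(G)$, lifting each $\bar x_i,\bar y_i$ to $G$, and using central translations to eliminate the ``$p$-power defects'' $x_i^p,y_i^p\in Z(G)$ as far as possible. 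Pairs whose defect can be killed contribute extraspecial factors of order $p^3$ commuting with the rest; the surviving pair absorbs the full defect and, depending on whether both of its generators or only one carries the $p$-th-power information of $Z(G)$, yields a cyclic factor $C_{p^t}$ (case (d), $n\geq 1$) or a modular factor $M(t+1,1)$ (case (a) when $n=1$ so that the extraspecial factor is trivial, case (c) when $n\geq 2$). Non-isomorphism of the four families is then read off from invariants like exponent, the shape of $G/Z(G)$, and whether $Z(G)$ lies in the image of the $p$-th power map on $G$.

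The main obstacle is the central-product step. One must argue that the defects really can be collected on a single pair, an $\mathbb{F}_p$-linear-algebra argument on the map $g\mapsto g^p\bmod G'$ that exploits cyclicity of $Z(G)/G'$, and then show that the surviving pair is forced into exactly one of the two displayed shapes, with no further types arising. A secondary complication is $p=2$, where the $p$-th power map is not $\mathbb{F}_2$-linear on $G/Z(G)$ and must be tracked through the quadratic refinement of the symplectic form; fortunately the case list does not change, but the adjustment-of-lifts procedure has to be re-examined pair by pair.
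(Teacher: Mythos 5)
The paper does not prove this statement at all: Lemma 1.3 is quoted verbatim from Corollary 10 of the Zhang--Li--Xu reference \cite{19}, so there is no in-paper argument to compare yours against. Judged on its own terms, your first stage is complete and correct: the identification of $G'$ as the unique minimal normal subgroup of an outer abelian $p$-group, the conclusion $|G'|=p$ with $G'\leq Z(G)$ and $Z(G)$ cyclic, and the converse via ``every nontrivial normal subgroup meets the center'' are all sound, and this equivalence is the only part of the lemma the surrounding paper actually uses in its proofs.

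The second stage, however, is a plan rather than a proof, and you say so yourself: the step you label ``the main obstacle'' is exactly the mathematical content of the classification, and it is not carried out. To close it you need to make precise the obstruction to killing a defect: replacing a lift $x$ by $xz^{-k}$ changes $x^p$ by $z^{-pk}$, so a defect can be eliminated if and only if $x^p\in\mho_1(Z(G))$, and the real object is the induced map $\bar\phi\colon G/Z(G)\to Z(G)/\mho_1(Z(G))\cong C_p$. For $p$ odd this is $\mathbb{F}_p$-linear (since $(gh)^p=g^ph^p[h,g]^{\binom{p}{2}}$ and $p\mid\binom{p}{2}$), so its kernel is either everything (all defects die, giving case (d) or (b)) or a hyperplane $H$; in the latter case one must observe that $H^{\perp}$ is a line contained in $H$, so a symplectic basis can be chosen with exactly one basis vector outside $H$, and that vector's lift $u$ satisfies $\langle u^p\rangle=Z(G)$, producing the $M(t+1,1)$ factor of cases (a) and (c). None of this linear algebra appears in your write-up, the dichotomy you state (``both generators or only one carries the $p$-th-power information'') does not match the actual trichotomy $\ker\bar\phi=G/Z(G)$ versus $\ker\bar\phi$ a hyperplane, and the $p=2$ case, where $\bar\phi$ is only a quadratic refinement, is explicitly deferred. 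The non-isomorphism of the families (including why $E*C_{p^t}$ does not depend on the isomorphism type of $E$ when $t\geq 2$) is likewise only asserted. Since the lemma is imported from the literature anyway, the honest options are either to cite \cite{19} as the paper does, or to finish the symplectic argument in the detail just described.
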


\begin{lemma}
A finite $p$-group $G$ is of maximal class if and only if it has a subgroup $A$ of order $p^2$ such that $C_G(A)=A$.
\end{lemma}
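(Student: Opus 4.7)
I plan to prove both implications by induction on $n$, where $|G|=p^n$, using the standard identity $\mathrm{class}(G)=\mathrm{class}(G/Z(G))+1$ whenever $Z(G)\ne 1$.

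\emph{Backward direction.} Suppose $A\le G$ with $|A|=p^2$ and $C_G(A)=A$. Since $A$ is abelian, $Z(G)\le A$, so $|Z(G)|\in\{p,p^2\}$. If $|Z(G)|=p^2$, then $Z(G)=A$ and $G=C_G(Z(G))=A$, giving $|G|=p^2$ and class $1=n-1$. Otherwise $|Z(G)|=p$. I would then observe that $|N_G(A)|=p^3$: the embedding $N_G(A)/A=N_G(A)/C_G(A)\hookrightarrow\mathrm{Aut}(A)$, combined with the fact that $\mathrm{Aut}(A)$ has Sylow $p$-subgroup of order $p$ (whether $A\cong C_p\times C_p$ or $A\cong C_{p^2}$), gives $|N_G(A):A|\le p$, while the normalizer condition forces $N_G(A)\supsetneq A$. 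Then $N_G(A)/Z(G)\le G/Z(G)$ has order $p^2$ and is self-centralizing: if $gZ(G)$ centralizes $N_G(A)/Z(G)$, then in particular $[g,A]\le Z(G)\le A$, whence $g^{-1}Ag\subseteq A$ and $g\in N_G(A)$. By induction, $G/Z(G)$ is of maximal class of order $p^{n-1}$, and the class identity then gives $G$ of maximal class.

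\emph{Forward direction.} The base cases $n\le 3$ are immediate: take $A=G$ when $n=2$, and, when $n=3$, take $A$ to be any maximal subgroup of $G$, which is abelian of order $p^2$ and contains $Z(G)$, so $C_G(A)\ne G$ and thus $C_G(A)=A$. For $n\ge 4$, I would introduce the two-step centralizer $G^*:=C_G(Z_2(G))$. From the maximal-class hypothesis one derives $|Z(G)|=p$ and $|Z_2(G)|=p^2$, so $G^*\ne G$; since $G/G^*$ acts faithfully on $Z_2(G)$ and the Sylow $p$-subgroup of $\mathrm{Aut}(Z_2(G))$ has order $p$, we obtain $|G:G^*|=p$. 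Picking any $s\in G\setminus G^*$ and setting $A=\langle s,Z(G)\rangle$ yields an abelian subgroup of order $p^2$ (since $Z(G)\le G^*$ forces $s\notin Z(G)$), after which it suffices to establish $C_G(s)=A$, for then $C_G(A)=C_G(s)\cap C_G(Z(G))=C_G(s)=A$.

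The principal obstacle is exactly this final equality $C_G(s)=A$. The inclusion $A\le C_G(s)$ is clear, and $C_G(s)\cap Z_2(G)=Z(G)$ follows at once: the map $z\mapsto[z,s]$ sends $Z_2(G)$ into $Z(G)$ (because $[Z_2(G),G]\le Z(G)$), its kernel is $C_{Z_2(G)}(s)$, and this kernel has index $p$ in $Z_2(G)$ since $s\notin G^*$, so the kernel is $Z(G)$. Propagating this restriction through the lower central series $G=G_1\supsetneq G_2\supsetneq\cdots\supsetneq G_n=1$---using that each factor $G_i/G_{i+1}$ has order $p$ and so is centralized by the $p$-element $s$ modulo the next term, together with standard commutator identities $[G_i,G_j]\le G_{i+j}$---progressively squeezes $C_G(s)$ into $\langle s,Z(G)\rangle=A$. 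This is the substantive Blackburn-style analysis recorded in Proposition 1.8 of \cite{1}, which I would invoke rather than reconstruct in full.
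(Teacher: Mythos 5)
The paper offers no proof of this lemma at all---it is simply quoted from Proposition~1.8 of \cite{1}---so any argument you write is extra. Your backward direction is a complete and correct proof, the classical Suzuki-style induction: $Z(G)\le C_G(A)=A$ forces $|Z(G)|\in\{p,p^2\}$, the case $|Z(G)|=p^2$ collapses to $|G|=p^2$, and otherwise $N_G(A)/Z(G)$ is a self-centralizing subgroup of order $p^2$ in $G/Z(G)$ (both inclusions are justified: $[g,A]\le Z(G)$ gives $g\in N_G(A)$, and a group of order $p^2$ is abelian), after which induction and $\mathrm{cl}(G)=\mathrm{cl}(G/Z(G))+1$ finish. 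I would accept that half as written.

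The forward direction has a genuine gap, in two places. First, for an arbitrary $s\in G\setminus G^*$ the subgroup $\langle s,Z(G)\rangle$ need not have order $p^2$: your parenthetical only shows $s\notin Z(G)$, hence order $\ge p^2$. The equality $|A|=p^2$ is in fact a consequence of the very statement $C_G(s)=A$ that you defer, so the logical order is backwards; the clean route is to prove $|C_G(s)|=p^2$ and then set $A=C_G(s)$, which is automatically abelian and self-centralizing. Second, and more seriously, the mechanism you sketch for $C_G(s)=A$ is vacuous: \emph{every} element of $G$ centralizes every factor $G_i/G_{i+1}$ of the lower central series (that is the definition of the series), so this observation cannot squeeze $C_G(s)$ at all. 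The argument that actually works uses the two-step sections $K_i/K_{i+2}$: the commutator map $z\mapsto [z,s]$ from $Z_{i+1}/Z_i$ to $Z_i/Z_{i-1}$ is injective only when $s$ acts nontrivially on $Z_{i+1}/Z_{i-1}$, i.e.\ when $s$ lies outside the two-step centralizer $C_G(Z_{i+1}/Z_{i-1})$. Your $s$ is only guaranteed to avoid one of these, namely $G^*=C_G(Z_2(G))$. If $s$ lies in some other two-step centralizer $C_G(K_i/K_{i+2})\ne G^*$ (and maximal-class groups with several distinct two-step centralizers exist once $n\ge p+2$), the map on that section is trivial, the kernel there has order at least $p^2$, and one gets $|C_G(s)|\ge p^3$, so the claimed equality $C_G(s)=\langle s,Z(G)\rangle$ fails for that $s$. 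What is needed is a \emph{uniform} element avoiding the union of all two-step centralizers; the existence of such an element is Blackburn's theorem and is exactly the substantive content of the cited Proposition~1.8. As written, then, your forward direction is not a proof but a reduction to the same reference the paper invokes, routed through an intermediate claim that is false in the stated generality. Either carry out the uniform-element argument honestly or simply cite \cite{1} for this implication, as the paper does.
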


\section{Main results}

Our first result indicates an important property of the groups in $\calc$.

\begin{theorem}
If a finite group $G$ is contained in $\calc$, then $|Z(G)|$ is a prime.
\end{theorem}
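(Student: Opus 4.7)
The plan is to produce a third value of $m_G$ whenever $|Z(G)|$ fails to be prime, contradicting the hypothesis $G \in \calc$.

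I first record three measures available for every finite group: $m_G(\{1\}) = |G|$, and, since $C_G(G) = Z(G)$ and $C_G(Z(G)) = G$, also $m_G(G) = m_G(Z(G)) = |G| \cdot |Z(G)|$. In particular, $G$ and $Z(G)$ automatically share a common measure.

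The main case is $|Z(G)| \geq 2$. Then $|G|$ and $|G| \cdot |Z(G)|$ are already two distinct values in the image of $m_G$, so by the two-value hypothesis they are the only ones, and $m^*(G) = |G| \cdot |Z(G)|$ with $Z(G), G \in \CD(G)$. For any subgroup $Z_0 \leq Z(G)$, centrality gives $C_G(Z_0) = G$, hence $m_G(Z_0) = |Z_0| \cdot |G|$. The constraint $m_G(Z_0) \in \{|G|,\, |G| \cdot |Z(G)|\}$ forces $|Z_0| \in \{1, |Z(G)|\}$, so $Z(G)$ has no proper nontrivial subgroup, and therefore $|Z(G)|$ is prime.

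The residual case $Z(G) = 1$ is the main technical obstacle, because all three canonical measures collapse to $|G|$ and the above argument is vacuous. To rule out a nontrivial $G \in \calc$ with trivial center, I would pass to the Chermak-Delgado subgroup $M = M(G)$, which is characteristic, abelian, and contains $Z(G)$. The idea is to replay the game with $M$ in place of $Z(G)$: any proper nontrivial $M_0 < M$ cannot lie in $\CD(G)$ (else $M \leq M_0$, impossible), so $m_G(M_0) = |G|$. Combined with $M \leq C_G(M)$, the identity $C_G(C_G(M)) = M$, and the fact that $Z(G) = 1$ forces $G$ to be non-nilpotent (so $|G|$ has at least two prime divisors, which brings the centers of Sylow subgroups into play), one should be able to produce an abelian overgroup of $M$ inside $C_G(M)$ whose measure lies strictly between $|G|$ and $m^*(G)$, giving the forbidden third value. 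Making this last step fully rigorous is where I expect the bulk of the technical work.
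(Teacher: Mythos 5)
Your treatment of the case $|Z(G)|\geq 2$ is correct and complete: since $m_G(1)=|G|$ and $m_G(Z(G))=|Z(G)|\,|G|$ are then two distinct values, they exhaust the image of $m_G$, and evaluating $m_G$ on the subgroups of $Z(G)$ forces $Z(G)$ to have no proper nontrivial subgroup, hence prime order. This is in fact a more uniform argument than the paper's second case, which only treats explicitly the situation where $|Z(G)|$ has two distinct prime divisors and leaves the case $|Z(G)|=p^k$, $k\geq 2$, implicit.

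The genuine gap is the case $Z(G)=1$, which you explicitly leave unfinished. Your sketch via the Chermak--Delgado subgroup $M(G)$ does not obviously close: knowing that proper nontrivial subgroups of $M(G)$ have measure $|G|$ gives no leverage, and ``producing an abelian overgroup of $M$ with intermediate measure'' is precisely the assertion you would still need to prove. The missing idea is more elementary and is the one the paper uses: since $G\neq 1$ and $Z(G)=1$, $G$ is not a $p$-group, so choose Sylow subgroups $S_p$ and $S_q$ for two distinct primes, of orders $p^m$ and $q^n$. Because $1\neq Z(S_p)\subseteq C_G(S_p)$, the measure $m_G(S_p)$ is divisible by $p^{m+1}$ and therefore differs from $m_G(1)=|G|$, whose $p$-part is only $p^m$; similarly $q^{n+1}\mid m_G(S_q)$, so $m_G(S_q)\neq |G|$. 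The two-value hypothesis then forces $m_G(S_p)=m_G(S_q)$, whence $p^{m+1}$ divides $q^n|C_G(S_q)|$ and hence divides $|C_G(S_q)|$, contradicting the fact that $|C_G(S_q)|$ divides $|G|$. Without this (or some equally concrete) argument, your proof establishes the theorem only for groups with nontrivial center.
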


\begin{proof}
Assume that $|Z(G)|$ is not a prime.

If $|Z(G)|=1$, then we have $m_G(1)=m_G(G)=|G|$. Also, $G$ cannot be a $p$-group. It follows that there are at least two distinct primes $p$ and $q$ dividing $|G|$. Let $S_p$ and $S_q$ be a Sylow $p$-subgroup and a Sylow $q$-subgroup of $G$, of orders $p^m$ and $q^n$, respectively. Since $1\neq Z(S_p)\subseteq C_G(S_p)$, we get $p^{m+1}\mid m_G(S_p)$. Similarly, $q^{n+1}\mid m_G(S_q)$. We infer that $m_G(S_p)\neq m_G(1)$ and $m_G(S_q)\neq m_G(1)$, and so $m_G(S_p)=m_G(S_q)$. Then $p^{m+1}\mid q^n|C_G(S_q)|$, i.e. $p^{m+1}\mid |C_G(S_q)|$, a contradiction.

If there are two distinct primes $p$ and $q$ dividing $|Z(G)|$, then $Z(G)$ contains two subgroups of orders $p$ and $q$, say $H$ and $K$. It results that the following three Chermak-Delgado measures
$$m_G(H)=p|G|,\, m_G(K)=q|G| \mbox{ and } m_G(Z(G))=m_G(G)=|Z(G)||G|$$are distinct, contradicting our hypothesis.

This completes the proof.
\end{proof}

Using Theorem 2.1, we are able to determine the abelian groups in $\calc$.

\begin{corollary}
The cyclic groups of prime order are the unique abelian groups contained in $\calc$.
\end{corollary}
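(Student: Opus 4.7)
The plan is very short because Theorem 2.1 does almost all the work. If $G$ is abelian and lies in $\calc$, then $Z(G)=G$, and Theorem 2.1 forces $|G|=|Z(G)|$ to be a prime $p$. Any group of prime order is cyclic, so $G\cong C_p$.

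For the converse, I would just verify directly that a cyclic group $G$ of prime order $p$ belongs to $\calc$. Its subgroup lattice $L(G)$ consists only of the trivial subgroup and $G$ itself, with Chermak-Delgado measures $m_G(1)=|G|=p$ and $m_G(G)=|G|\,|C_G(G)|=p^2$. These two values are distinct, so $m_G$ takes exactly two values on $L(G)$, confirming that $G\in\calc$.

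There is essentially no obstacle here: the corollary is just the combination of Theorem 2.1 with the tautology that an abelian group coincides with its center, plus the trivial check that $C_p$ actually has a two-valued measure. The only thing to be careful about is not to forget the converse direction, since the statement says \emph{unique abelian groups}, which includes the claim that $C_p$ really does lie in $\calc$.
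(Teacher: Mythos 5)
Your proof is correct and follows exactly the route the paper intends: the paper gives no explicit proof but derives the corollary directly from Theorem 2.1 (an abelian $G$ in $\calc$ equals its center, hence has prime order), and your verification that $C_p$ indeed lies in $\calc$ is the right, trivial converse check.
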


By Corollary 2.2 we easily infer that $\calc$ is not closed under subgroups, homomorphic images, direct products or extensions. Also, from the first part of the proof of Theorem 2.1 we obtain that:

\begin{corollary}
All groups contained in $\calc$ are $p$-groups.
\end{corollary}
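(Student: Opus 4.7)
The plan is to adapt the Sylow-subgroup argument that already appears in the $|Z(G)|=1$ case of the proof of Theorem 2.1; the key observation is that this argument never actually required the assumption on the centre, only the existence of two distinct prime divisors of $|G|$.

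Suppose, for contradiction, that $G\in\calc$ is not a $p$-group. Then there exist distinct primes $p,q$ dividing $|G|$, along with Sylow subgroups $S_p$ and $S_q$ of orders $p^m$ and $q^n$ respectively. Since $Z(S_p)\neq 1$ is contained in $C_G(S_p)$, the measure $m_G(S_p)=|S_p|\,|C_G(S_p)|$ is divisible by $p^{m+1}$; symmetrically, $q^{n+1}\mid m_G(S_q)$. The trivial subgroup contributes $m_G(1)=|G|$, whose $p$-part is exactly $p^m$ and whose $q$-part is exactly $q^n$.

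Now I invoke the hypothesis $|m_G(L(G))|=2$: among the three values $m_G(1)$, $m_G(S_p)$, $m_G(S_q)$, at least two must coincide. The equality $m_G(S_p)=m_G(1)=|G|$ is impossible, since it would force $p^{m+1}\mid |G|$, contradicting the fact that $p^m$ is the full $p$-part of $|G|$; the equality $m_G(S_q)=m_G(1)$ fails symmetrically; and $m_G(S_p)=m_G(S_q)$ would give $p^{m+1}\mid q^n|C_G(S_q)|$, hence $p^{m+1}\mid |C_G(S_q)|$, which again violates the $p$-part of $|G|$. All three coincidences being excluded, the contradiction is complete and $G$ must be a $p$-group.

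There is essentially no obstacle here; the proof is a direct repackaging of the Sylow-counting already performed in Theorem 2.1. The only care needed is the mild bookkeeping that, without the assumption $Z(G)=1$, one must rule out all three pairwise coincidences among the measures of $1$, $S_p$, and $S_q$, rather than the single one handled explicitly in the earlier argument.
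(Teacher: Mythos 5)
Your proof is correct and is essentially the paper's own argument: the paper derives this corollary precisely by observing that the Sylow-subgroup computation in the $|Z(G)|=1$ case of Theorem 2.1 (namely $p^{m+1}\mid m_G(S_p)$, $q^{n+1}\mid m_G(S_q)$, and $m_G(1)=|G|$) never uses the hypothesis on the centre. Your extra care in ruling out all three pairwise coincidences is just a slightly more explicit version of the same bookkeeping.
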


Since our study can be reduced to $p$-groups and it is completely finished for abelian groups, in what follows
we will suppose that $G$ is a non-abelian $p$-group of order $p^n$ ($n\geq 3$) belonging to $\calc$. Then:
\begin{itemize}
\item[{\rm a)}] ${\rm Im}(m_G)=\{p^n,p^{n+1}\}$, and consequently $m^*(G)=p^{n+1}$;
\item[{\rm b)}] $Z(G)$ is the unique minimal normal subgroup of $G$, and consequently $Z(G)\subseteq G'\subseteq\Phi(G)$;
\item[{\rm c)}] $HZ(G)\in{\cal CD}(G),\, \forall\, H\leq G$ satisfying $Z(G)\nsubseteq H$.

\hspace{5mm}Indeed, for such a subgroup $H$ of $G$ we have $H\cap Z(G)=1$, and therefore $m_G(H\cap Z(G))=p^n$. Then the inequality
$$m_G(H)m_G(Z(G))\leq m_G(HZ(G))m_G(H\cap Z(G))$$becomes
$$p^{n+1}m_G(H)\leq p^n m_G(HZ(G)),$$that is
$$p\,m_G(H)\leq m_G(HZ(G)).$$Clearly, this implies that $m_G(HZ(G))=p^{n+1}$, i.e. $HZ(G)\in{\cal CD}(G)$.
\end{itemize}

There are many examples of finite non-abelian $p$-groups $G$ such that ${\cal CD}(G)=\{Z(G),G\}$ (see e.g. Corollary 2.2 and Proposition 2.3 of \cite{5}). Using Corollary 2.2 and the above item c), we are able to prove that the intersection between this class of groups and $\calc$ is empty.

\begin{corollary}
$\calc$ does not contain non-abelian $p$-groups $G$ with ${\cal CD}(G)=\{Z(G),G\}$.
\end{corollary}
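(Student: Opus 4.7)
The plan is to assume for contradiction that $G$ is a non-abelian $p$-group in $\calc$ with $\CD(G)=\{Z(G),G\}$, and to extract from item c) above enough structural information to reduce $G$ to a generalized quaternion group, which I can then rule out by a direct centralizer computation.

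First, by Theorem 2.1 we have $|Z(G)|=p$, and since $G$ is non-abelian, $|G|=p^n$ with $n\geq 3$. I would then apply item c) in the form: for every $H\leq G$ with $Z(G)\nsubseteq H$, the product $HZ(G)$ lies in $\CD(G)=\{Z(G),G\}$. Because $|Z(G)|=p$ is prime, $Z(G)\nsubseteq H$ gives $H\cap Z(G)=1$, so $|HZ(G)|=p|H|$. Thus $HZ(G)=Z(G)$ forces $H=1$, whereas $HZ(G)=G$ forces $|H|=p^{n-1}$, i.e.\ $H$ has index $p$ in $G$. In short, every non-trivial subgroup of $G$ that does not contain $Z(G)$ has index $p$.

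Applying this dichotomy to a subgroup $H$ of order $p$ (which cannot have index $p$, since $n\geq 3$), I obtain $Z(G)\subseteq H$, hence $H=Z(G)$. Therefore $G$ has a unique subgroup of order $p$, and Lemma~1.2 together with non-abelianness of $G$ forces $p=2$ and $G\cong Q_{2^n}$ for some $n\geq 3$. It then remains to contradict the hypothesis for $G=Q_{2^n}$. Consider the cyclic subgroup $\langle a\rangle$ of order $2^{n-1}$: it has index $2$, and since $b$ inverts $a$ we get $C_G(\langle a\rangle)=\langle a\rangle$, so $m_G(\langle a\rangle)=2^{2n-2}$. For $n\geq 4$ this exceeds $2^{n+1}$, contradicting item a) that $m^*(G)=p^{n+1}$; for $n=3$ we have $m_G(\langle a\rangle)=2^4=m^*(G)$, so $\langle a\rangle$ lies in $\CD(G)$, yet $|\langle a\rangle|=4$ rules out $\langle a\rangle\in\{Z(G),G\}$, again a contradiction.

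The only real trick is the use of item c) to squeeze a unique-minimal-subgroup property out of the assumption $\CD(G)=\{Z(G),G\}$; once Lemma~1.2 delivers $Q_{2^n}$, the computation with $\langle a\rangle$ is routine, and the split between $n=3$ and $n\geq 4$ is the only case analysis needed.
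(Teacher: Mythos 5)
Your proposal is correct and follows essentially the same route as the paper: use item c) to show every subgroup of order $p$ must equal $Z(G)$, invoke Lemma 1.2 to reduce to $Q_{2^n}$, and kill that case via the measure of the cyclic maximal subgroup $\langle a\rangle$. The only (harmless) difference is at $n=3$: the paper cites the known fact that ${\cal CD}(Q_8)$ is a quasi-antichain of width $3$, while you argue directly that $\langle a\rangle\in{\cal CD}(Q_8)\setminus\{Z(G),G\}$, which is slightly more self-contained.
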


\begin{proof}
Assume that $\calc$ contains a non-abelian $p$-group $G$ satisfying ${\cal CD}(G)=\{Z(G),G\}$.

If $G$ possesses a minimal subgroup $H\neq Z(G)$, then $HZ(G)\in{\cal CD}(G)$ by c). On the other hand, we obviously have $HZ(G)\neq Z(G)$, and since ${\cal CD}(G)=\{Z(G),G\}$ we get $HZ(G)=G$. Then $|G|=p^2$, implying that $G$ is abelian, a contradiction.

If $Z(G)$ is the unique subgroup of order $p$ in $G$, then $G$ is a generalized quaternion $2$-group by Lemma 1.2, i.e. $p=2$ and $$G\cong Q_{2^n}=\langle a,\,b\mid a^{2^{n-1}}=1,\, a^{2^{n-2}}=b^2,\, b^{-1}ab=a^{-1}\rangle \mbox{ for some } n\geq 3.$$It results that $G$ has a cyclic maximal subgroup $H\cong\langle a\rangle$. So, $$m_G(H)=2^{2n-2}\leq 2^{n+1}=m^*(G),$$which means $n\leq 3$. Since $G$ is non-abelian we get $n=3$, that is $G\cong Q_8$. Then ${\cal CD}(G)$ is a quasi-antichain of width $3$, contradicting the hypothesis.
\end{proof}

Next we will focus on giving examples of non-abelian $p$-groups in $\calc$.

\begin{theorem}
All extraspecial $p$-groups are contained in $\calc$.
\end{theorem}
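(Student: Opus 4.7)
The plan is to show directly that for an extraspecial $p$-group $G$ of order $p^n$, the image of $m_G$ consists of exactly the two values $p^n$ and $p^{n+1}$, so that $G\in\calc$. Since $|Z(G)|=p$, every subgroup $H\leq G$ satisfies either $Z(G)\subseteq H$ or $H\cap Z(G)=1$, and I would treat the two cases separately.

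For the first case, I would exploit the fact that $G$ has nilpotence class $2$ (because $G'\leq Z(G)$) and that $G/Z(G)$ is elementary abelian (because $\Phi(G)=Z(G)$). The commutator then descends to a well-defined alternating bilinear form
\[\beta\colon G/Z(G)\times G/Z(G)\longrightarrow Z(G),\qquad\beta(\bar x,\bar y)=[x,y],\]
which is non-degenerate by the very definition of $Z(G)$. For a subgroup $H$ with $Z(G)\subseteq H$, an element $g\in G$ centralizes $H$ if and only if $\bar g$ is orthogonal to every element of $H/Z(G)$ under $\beta$; hence $C_G(H)/Z(G)=(H/Z(G))^{\perp}$. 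Non-degeneracy of $\beta$ then gives
\[|H/Z(G)|\cdot|C_G(H)/Z(G)|=|G/Z(G)|=p^{n-1},\]
and multiplying by $|Z(G)|^2=p^2$ yields $m_G(H)=|H|\,|C_G(H)|=p^{n+1}$.

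For the second case, $H\cap Z(G)=1$ but $Z(G)\subseteq HZ(G)$. Since $Z(G)$ is central, $C_G(HZ(G))=C_G(H)$, so
\[m_G(HZ(G))=|HZ(G)|\cdot|C_G(H)|=p\cdot m_G(H).\]
Applying the first case to $HZ(G)$ gives $m_G(HZ(G))=p^{n+1}$, and therefore $m_G(H)=p^n$.

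Both values are realized: $m_G(1)=|G|=p^n$ and $m_G(G)=|Z(G)|\cdot|G|=p^{n+1}$. Hence $\mathrm{Im}(m_G)=\{p^n,p^{n+1}\}$, as required. The main technical ingredient is the first step, where the extraspecial hypothesis enters essentially through the non-degeneracy of the symplectic form on $G/Z(G)$; the second step is then a routine consequence of the centrality of $Z(G)$.
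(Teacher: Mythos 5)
Your proof is correct, and it takes a genuinely different route from the paper. The paper's proof is essentially a two-line citation: it invokes the known description of ${\cal CD}(G)$ for extraspecial $G$ as the set of all subgroups containing $Z(G)$ (Glauberman, or Vieira's thesis), and a lemma of Bouc--Mazza asserting that every subgroup $H$ with $Z(G)\nsubseteq H$ has $m_G(H)=|G|$; together these give exactly two values. You instead prove both facts from scratch via the standard symplectic structure on $G/Z(G)$: the identity $|H/Z(G)|\cdot|(H/Z(G))^{\perp}|=|G/Z(G)|$ for the non-degenerate alternating form handles subgroups containing the center (and in fact recovers the cited description of ${\cal CD}(G)$, since it shows all such subgroups attain the maximal measure $p^{n+1}$), while your reduction $m_G(HZ(G))=p\,m_G(H)$, valid because $C_G(HZ(G))=C_G(H)$ and $H\cap Z(G)=1$, handles the rest and is essentially the content of the Bouc--Mazza lemma. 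The trade-off is clear: the paper's proof is shorter but rests on external references, whereas yours is self-contained, makes visible exactly where the extraspecial hypothesis enters (non-degeneracy of the form), and as a by-product identifies ${\cal CD}(G)$ explicitly. All steps check out, including the computation of the two realized values $m_G(1)=p^n$ and $m_G(G)=p^{n+1}$.
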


\begin{proof}
Let $G$ be an extraspecial $p$-group. It is well-known that ${\cal CD}(G)$ consists of all subgroups $H$ of $G$ containing $Z(G)$ (see e.g. Example 2.8 of \cite{8} or Theorem 4.3.4 of \cite{17}). Consequently, all these subgroups have the same Chermak-Delgado measure. On the other hand, by Lemma 2.6 of \cite{2} any subgroup $H$ of $G$ with $Z(G)\nsubseteq H$ satisfies $m_G(H)=|G|$. Thus the function $m_G$ has exactly two values, as desired.
\end{proof}

Using GAP, we are also able to give an example of a non-extraspecial non-abelian $p$-group in $\calc$, namely SmallGroup($32$,$8$):
$$G=\langle a,\,b,\,c \mid a^4=1,\, b^4=a^2,\, c^2=bab^{-1}=a^{-1},\, ac=ca,\, cbc^{-1}=a^{-1}b^3\rangle.$$Note that the nilpotence class of $G$ is $3$. Also, ${\cal CD}(G)$ is described in Lemma 4.5.16 and Corollaries 4.5.20 and 4.5.21 of \cite{17}.
\bigskip

We observe that all non-abelian groups of order $p^3$ belongs to $\calc$ because they are extraspecial. The same thing cannot be said about non-abelian groups of order $p^4$: by Lemma 1.1 such a group $G$ has an abelian subgroup $A$ of order $p^3$, and so $m^*(G)\geq m_G(A)=p^6>p^5$, implying that $G$ is not contained in $\calc$. This argument can be extended in the following way.

\begin{proposition}
If a non-abelian group of order $p^n$ contains an abelian subgroup of order $\geq p^{\,\left[\frac{n+3}{2}\right]}$, then it does not belong to $\calc$.
\end{proposition}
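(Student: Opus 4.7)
The plan is to assume for contradiction that such a $G$ lies in $\calc$ and then exhibit a single subgroup whose Chermak-Delgado measure exceeds $m^*(G)$. By item a) in the remarks following Corollary 2.3, any non-abelian $p$-group of order $p^n$ belonging to $\calc$ satisfies $m^*(G)=p^{n+1}$. So it will suffice to produce some $H\leq G$ with $m_G(H)\geq p^{n+2}$, which will contradict the definition of $m^*(G)$.

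The natural candidate is the hypothesized abelian subgroup itself. Let $A\leq G$ be abelian with $|A|=p^k$, where $k\geq\left[\frac{n+3}{2}\right]$. Since $A$ is abelian, $A\subseteq C_G(A)$, and hence
$$m_G(A)=|A|\,|C_G(A)|\geq |A|^{2}=p^{2k}.$$
The remaining step is a short parity check showing $2k\geq n+2$: writing $n=2m$ gives $\left[\frac{n+3}{2}\right]=m+1$, so $k\geq m+1$ and $2k\geq n+2$; writing $n=2m+1$ gives $\left[\frac{n+3}{2}\right]=m+2$, so $k\geq m+2$ and $2k\geq n+3$. Either way $m_G(A)\geq p^{n+2}>p^{n+1}=m^*(G)$, which is the contradiction I want.

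I do not anticipate any real obstacle. The whole argument rests on the elementary inequality $|C_G(A)|\geq |A|$ for an abelian subgroup $A$, together with the threshold value $\left[\frac{n+3}{2}\right]$ being precisely the smallest $k$ for which the trivial bound $p^{2k}$ already overshoots $p^{n+1}$. The only subtlety to flag is that the value $m^*(G)=p^{n+1}$ is not proved afresh but taken from item a) after Corollary 2.3; and this in turn uses that $G$ is non-abelian of order $p^n\geq p^3$, which is exactly the setting of the proposition.
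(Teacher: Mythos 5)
Your proof is correct and follows essentially the same route the paper intends: the paper gives no separate proof of Proposition 2.6, only the $n=4$ prototype ($m_G(A)\geq |A|^2=p^6>p^5$) with the remark that ``this argument can be extended,'' and your argument is exactly that extension, combining $m^*(G)=p^{n+1}$ from item a) with $m_G(A)\geq |A|^2$ and the parity check on $\left[\frac{n+3}{2}\right]$.
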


Since any group of order $64$ contains an abelian subgroup of order $16$, by Proposition 2.6 we infer that:

\begin{corollary}
$\calc$ does not contain non-abelian groups of order $64$.
\end{corollary}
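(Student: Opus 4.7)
The plan is to leverage the fact, recorded in item (a) of the standing assumptions, that any non-abelian $p$-group $G$ of order $p^n$ in $\calc$ must satisfy $m^*(G)=p^{n+1}$. Hence it suffices to show that the hypothesized abelian subgroup $A$ forces $m_G(A)\geq p^{n+2}$, which contradicts membership in $\calc$.

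The key observation is the trivial but crucial inequality $A\subseteq C_G(A)$, valid because $A$ is abelian. From this I immediately get
\[
m_G(A)=|A|\,|C_G(A)|\geq |A|^2.
\]
By hypothesis $|A|\geq p^{\left[\frac{n+3}{2}\right]}$, so $|A|^2\geq p^{2\left[\frac{n+3}{2}\right]}$. A short case analysis on the parity of $n$ shows that $2\left[\frac{n+3}{2}\right]\geq n+2$ in both cases: if $n$ is odd, $2\left[\frac{n+3}{2}\right]=n+3$; if $n$ is even, $2\left[\frac{n+3}{2}\right]=n+2$. Either way $m_G(A)\geq p^{n+2}>p^{n+1}=m^*(G)$, the desired contradiction.

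Thus the entire proof collapses to three lines: quote the fact $m^*(G)=p^{n+1}$, apply $|C_G(A)|\geq|A|$, and note the parity estimate on $2\left[\frac{n+3}{2}\right]$. There is no real obstacle; the only thing to be careful about is the parity bookkeeping to ensure the bound $2\left[\frac{n+3}{2}\right]\geq n+2$ is tight enough for the inequality $m_G(A)>m^*(G)$ to go through in both the odd and even case, which it does with room to spare when $n$ is odd.
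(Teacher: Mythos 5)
Your inequality argument is sound, but it proves Proposition 2.6 rather than the Corollary. You speak of ``the hypothesized abelian subgroup $A$,'' yet the Corollary hypothesizes no such subgroup: it asserts that \emph{every} non-abelian group of order $64$ fails to lie in $\calc$. To apply your computation (or Proposition 2.6, which it re-derives) to an arbitrary non-abelian $G$ of order $2^6$, you must first exhibit an abelian subgroup of order at least $2^{\left[\frac{6+3}{2}\right]}=2^4=16$ inside $G$. That existence statement --- every group of order $64$ contains an abelian subgroup of order $16$ --- is precisely the nontrivial group-theoretic input the paper uses to deduce the Corollary from Proposition 2.6, and it does not follow from any measure computation. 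Without it your argument never gets started, so there is a genuine gap: the missing idea is the existence of $A$, not the estimate on $m_G(A)$.

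The estimate itself, namely $m_G(A)=|A|\,|C_G(A)|\ge |A|^2\ge p^{2\left[\frac{n+3}{2}\right]}\ge p^{n+2}>p^{n+1}=m^*(G)$ via item (a) of the standing assumptions, is correct and is exactly the mechanism behind Proposition 2.6 (whose proof the paper leaves implicit after treating the case $n=4$). So your parity bookkeeping is fine; what you owe is the classical fact about abelian subgroups of $2$-groups of order $64$.
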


Another application of Proposition 2.6 is the following:

\begin{theorem}
Let $G$ be a finite $p$-group of nilpotence class $2$ contained in $\calc$. Then $G$ is extraspecial.
\end{theorem}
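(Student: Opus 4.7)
The plan is to combine the class-$2$ hypothesis with item (b) preceding the theorem to pin down $G'$, and then, if $G$ fails to be extraspecial, manufacture a large abelian subgroup via the commutator pairing on the Frattini quotient and contradict Proposition 2.6. Class $2$ gives $G'\subseteq Z(G)$, and item (b) gives $Z(G)\subseteq G'\subseteq \Phi(G)$, so $G'=Z(G)$; by Theorem 2.1 this group is cyclic of order $p$. Being extraspecial amounts to the extra condition $\Phi(G)=G'$. Assume for contradiction this fails, so $|\Phi(G)|\geq p^2$ and $d:=\log_p|G/\Phi(G)|\leq n-2$.

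The key step is the commutator pairing on $V:=G/\Phi(G)$. In a class-$2$ group the commutator $[\,\cdot\,,\,\cdot\,]\colon G\times G\to Z(G)\cong \Z/p$ is bilinear and alternating, and the identities $[x^p,y]=[x,y]^p=1$ (valid since $|Z(G)|=p$) together with $[G',G]=1$ show that it descends to a bilinear alternating form $B\colon V\times V\to\Z/p$. The radical of $B$ is $Z(G)\Phi(G)/\Phi(G)$, which is trivial because $Z(G)\subseteq\Phi(G)$, so $B$ is non-degenerate. Hence $d$ is even and $V$ contains a totally isotropic subspace $W$ of dimension $d/2$; its preimage $A\leq G$ contains $\Phi(G)$, has order $p^{n-d/2}$, and is abelian because $[a,b]=B(\bar a,\bar b)=0$ for all $a,b\in A$.

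To conclude, one checks $n-d/2\geq \left[\frac{n+3}{2}\right]$ in both parities of $n$: for $n$ even this is exactly $d\leq n-2$; for $n$ odd, $d$ being even forces $d\leq n-3$, whence $n-d/2\geq(n+3)/2$. Thus $A$ is abelian of order $\geq p^{\left[(n+3)/2\right]}$, and Proposition 2.6 gives $G\notin\calc$, contrary to hypothesis. The main obstacle is justifying that the commutator descends to a \emph{non-degenerate} alternating form on $V$ (rather than only on $G/Z(G)$ or $G/G'$), since it is precisely this descent that lets an isotropic subspace in $V$ lift to an abelian subgroup enlarged by the full factor $|\Phi(G)|$, bringing its order up to the threshold required by Proposition 2.6.
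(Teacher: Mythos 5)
Your proof is correct, but it takes a genuinely different route from the paper's. The paper first deduces $G'=Z(G)$ of order $p$ exactly as you do, then observes that $G$ is therefore outer abelian and invokes the classification of Lemma 1.3, ruling out the three non-extraspecial families one by one (types (a) and (c) via explicit large abelian subgroups and Proposition 2.6, type (d) via Proposition 2.6 or the size of the center). You bypass the classification entirely: you build the abelian subgroup intrinsically, as the preimage of a Lagrangian for the commutator form on $G/\Phi(G)$, which is more self-contained (no appeal to reference [19]) and explains \emph{why} the large abelian subgroup exists rather than locating it case by case. That said, you have overlooked that your own preliminary observations already finish the proof in one line: the identity $[x^p,y]=[x,y]^p=1$, which you prove in order to descend the form to $G/\Phi(G)$, says precisely that $G^p\subseteq Z(G)$, whence $\Phi(G)=G^pG'\subseteq Z(G)=G'$ and so $\Phi(G)=G'=Z(G)$ has order $p$ --- i.e.\ $G$ is extraspecial outright. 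Your contradiction hypothesis $|\Phi(G)|\geq p^2$ is thus vacuous, and the entire symplectic construction and the appeal to Proposition 2.6, while logically valid step by step, are argued inside an inconsistent assumption and could be deleted. (This also shows the theorem needs less than membership in $\calc$: any class-$2$ $p$-group with center of order $p$ is extraspecial.)
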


\begin{proof}
Since the nilpotence class of $G$ is $2$, we have that $G/Z(G)$ is abelian and so $G'\subseteq Z(G)$. By Theorem 1 we get $G'=Z(G)$, which implies that $G$ is an outer abelian $p$-group. Then $G$ belongs to one of the four classes of groups in Lemma 1.3.

We observe that $M(n,1)$ has a cyclic subgroup of order $p^n$, namely $\langle a\rangle$, and $n\geq \left[\frac{n+4}{2}\right]$ for $n\geq 3$. Thus it cannot be contained in $\calc$ by Proposition 2.6. Also, it is easy to see that a central product $E*A$, where $E$ is an extraspecial $p$-group of order $p^{2m+1}$ and $A\cong M(n,1)$, $n\geq 3$, always has an abelian subgroup of order $p^{m+n}$. Since $m+n\geq \left[\frac{2m+n+4}{2}\right]$ for $n\geq 3$, by Proposition 2.6 we infer that $E*A$ does not belong to $\calc$. Similarly, a central product $E*A$, where $E$ is an extraspecial $p$-group of order $p^{2m+1}$ and $A\cong C_{p^t}$ with $t\geq 2$, always has an abelian subgroup of order $p^{m+t}$. If $t\geq 3$ then $m+t\geq \left[\frac{2m+t+4}{2}\right]$, implying that $E*A$ is not contained in $\calc$. If $t=2$, it suffices to observe that the center of $E*A$ is of order $p^2$, and consequently $E*A$ is not contained in $\calc$ by Theorem 1. These shows that the unique possibility is that $G$ be an extraspecial $p$-group, as desired.
\end{proof}

Our last result shows that the non-abelian groups of order $p^3$ are in fact the unique $p$-groups of maximal class in $\calc$.

\begin{theorem}
Let $G$ be a finite $p$-group of maximal class contained in $\calc$. Then $G$ is non-abelian of order $p^3$.
\end{theorem}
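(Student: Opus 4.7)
The plan is to split the argument on $n$, where $|G|=p^n$, and derive a contradiction whenever $n\geq 4$. First observe that $G$ cannot be abelian: by Corollary 2.2 an abelian group in $\calc$ is cyclic of prime order, but such a group has no subgroup of order $p^2$ and so fails the Lemma 1.4 criterion for maximal class. Thus $G$ is a non-abelian $p$-group of order $p^n$ with $n\geq 3$, and when $n=3$ the conclusion holds directly (the class is then $n-1=2$, so $G$ is automatically non-abelian of order $p^3$).

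The key input for ruling out $n\geq 4$ is observation a) from the excerpt: a non-abelian $p$-group $G\in\calc$ of order $p^n$ satisfies $\mathrm{Im}(m_G)=\{p^n,p^{n+1}\}$, so every subgroup $H\leq G$ obeys $p^n\leq m_G(H)\leq p^{n+1}$. The idea is then to exhibit a subgroup whose Chermak--Delgado measure lies outside this window.

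For $n\geq 5$ the argument is immediate from Lemma 1.4: a $p$-group of maximal class has a self-centralizing subgroup $A$ of order $p^2$, hence $m_G(A)=|A|\,|C_G(A)|=p^4<p^n$, contradicting the lower bound $m_G(A)\geq p^n$.

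The only genuinely delicate case is $n=4$, and it will be the main (though still mild) obstacle: Lemma 1.4 alone yields $m_G(A)=p^4=p^n$, which is \emph{consistent} with the image and so produces no contradiction. To overcome this I would appeal instead to Lemma 1.1, which guarantees an abelian subgroup $B\leq G$ of order $p^3$. Then $B\subseteq C_G(B)$, so $m_G(B)\geq p^3\cdot p^3=p^6>p^5=p^{n+1}$, contradicting $m^*(G)=p^{n+1}$. Once this case is settled the theorem follows by combining the three cases $n=3$, $n=4$, and $n\geq 5$.
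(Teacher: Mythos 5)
Your proof is correct and takes essentially the same route as the paper's: Lemma 1.4 combined with ${\rm Im}(m_G)=\{p^n,p^{n+1}\}$ forces $m_G(A)=p^4\in\{p^n,p^{n+1}\}$, hence $n\leq 4$, and the case $n=4$ is excluded by the abelian subgroup of order $p^3$ supplied by Lemma 1.1, exactly as in the paper's remark preceding Proposition 2.6. The only difference is presentational, in that you separate the cases $n\geq 5$, $n=4$ and $n=3$ explicitly.
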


\begin{proof}
Obviously, $G$ is non-abelian. Let $|G|=p^n$. By Lemma 1.4 we know that $G$ possesses a subgroup $A$ of order $p^2$ such that $C_G(A)=A$. It follows that $m_G(A)=p^4$, and therefore we have either $n=3$ or $n=4$. Since the case $n=4$ is impossible, we get $n=3$, as desired.
\end{proof}

Inspired by the above examples, we end this note by indicating the following open problem.

\bigskip
\noindent{\bf Open problem.} Which are the pairs $(p,n)$, where $p$ is a prime and $n$ is a positive integer, such that $\calc$ contains groups of order $p^n$?

Note that all pairs $(p,n)$ with $n$ odd satisfy this property by Corollary 2.2 and Theorem 2.5.

\vspace*{3ex}\small

\hfill
\begin{minipage}[t]{5cm}
Marius T\u arn\u auceanu \\
Faculty of  Mathematics \\
``Al.I. Cuza'' University \\
Ia\c si, Romania \\
e-mail: {\tt tarnauc@uaic.ro}
\end{minipage}

\end{document}